\newtheorem{thm}{Theorem}[section]
\newtheorem{lem}[thm]{Lemma}
\newtheorem{prop}[thm]{Proposition}
\newtheorem{cor}[thm]{Corollary}
\begin{document}
\title{The extension of some $D(4)$-pairs}
\author{Alan Filipin}

\maketitle
\begin{abstract}
In this paper we illustrate the use of the results from \cite{MC-BF} proving that $D(4)$-triple $\{a,b,c\}$ with $a<b<a+57\sqrt{a}$ has a unique extension to a quadruple with a larger element. This furthermore implies that $D(4)$-pair $\{a,b\}$ cannot be extended to a quintuple if $a<b<a+57\sqrt{a}$.
\end{abstract}
\bigskip{\textrm{2010} \textit{Mathematics Subject Classification}: Primary 11D09; Secondary 11J68}\\
{\it Keywords}: Diophantine $m$-tuples, simultaneous Diophantine equations
\section{Introduction}\label{sec:intr}
Let $n$ be a nonzero integer. The set $\{a_1,a_2,\dots,a_m\}$ of $m$ positive integers is called a Diophantine $m$-tuple with the property $D(n)$ or simply $D(n)$-$m$-tuple if $a_ia_j+1$ is a perfect square for all $i,j$ with $1 \leq i<j \leq m$.

The problem of finding such sets has a long and rich history. Reader should visit the web-page \cite{duje} to find all problems on this topic. More precisely, that includes problems which have been solved over the years, open problems, conjectures, methods used in solving those problems with their improvement in recent years and all the references.

In the case $n=4$ there is a conjecture \cite{DR} that there does not exist a $D(4)$-quintuple. Actually, the conjecture is that if $\{a,b,c,d\}$ is a $D(4)$-quadruple with $a<b<c<d$, then $$d=d_+=a+b+c+\frac{1}{2}\left(abc+rst\right),$$ where $r,s$ and $t$ are positive integers defined by $r^2=ab+4,$ $s^2=ac+4$ and $t^2=bc+4$. From now on, $r$, $s$ and $t$ will always be defined in such a way. If $\{a,b,c,d\}$ is a $D(4)$-quadruple with $a<b<c<d$, and $d=d_+$, we call such quadruple a regular one. The conjecture also implies that all $D(4)$-quadruples are regular. We also define $$d_-=a+b+c+\frac{1}{2}\left(abc-rst\right).$$ If $d_-\neq0$, the set $\{a,b,d_-,c\}$ is a regular $D(4)$-quadruple with $d_-<c$.

In recent years the author \cite{JNT} proved that there does not exist a $D(4)$-sextuple and that there are only finitely many quintuples. He furthermore \cite{acta} proved that irregular quadruple cannot be extended to a quintuple with a larger element and, very recently, together with Ba\'ci\' c \cite{HAZU} he proved that there are at most $7\cdot 10^{36}$ $D(4)$-quintuples which is the best known bound for now.

In this paper we show the important use of the results from \cite{MC-BF} which is the main intention of this paper. If $\{a,b\}$ is a $D(4)$-pair with $a<b<a+57\sqrt{a}$, we know firstly from \cite[Lemma 3]{MC-BF} that we have to consider only the case when $b>10^4$, which implies $a<b<2a$. Then, from \cite[Lemma 1]{MC-BF} we know how to generate all possible $c$'s in $D(4)$-triple $\{a,b,c\}$. Precisely, we have $c=c_{\nu}^{\pm}$ where $$c_0^+=0,\,c_1^+=a+b+2r,\,c_{\nu+2}^+=(ab+2)c_{\nu+1}^+-c_{\nu}^++2(a+b),$$ $$c_0^-=0,\,c_1^-=a+b-2r,\,c_{\nu+2}^-=(ab+2)c_{\nu+1}^--c_{\nu}^-+2(a+b).$$
And finally, in \cite[Theorem 1]{MC-BF} we proved that if $\{a,b,c\}$ is $D(4)$-triple with $a<b<2a$ and $\{a,b,c,d\}$ $D(4)$-quadruple with $d>d^+$ and if there does not exist a $D(4)$-quadruple $\{a,b,c',c\}$ with $0<c'<d_-$, then $c<b^6$.

The assumption that there does not exist a $D(4)$-quadruple $\{a,b,c',c\}$ with $0<c<d_-$ is not as restrictive as it might seem to be, because we know exactly which $c$'s we have to consider and how those elements are ordered, so we will later check that this assumption is satisfied starting with the smallest possible $c$.

Our main results is the following Theorem.
\begin{thm}\label{thm:main1}
If $\{a,b,c,d\}$ is a $D(4)$-quadruple with $a<b < a+57\sqrt{a}$ and $c<d$, then $d=d_+$.
\end{thm}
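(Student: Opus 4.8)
The plan is to argue by induction on the size of $c$, feeding the enumeration of \cite[Lemma 1]{MC-BF} into \cite[Theorem 1]{MC-BF} and then disposing of a short bounded range by hand.

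First I would reduce to the generic configuration. By \cite[Lemma 3]{MC-BF} it suffices to treat pairs with $b>10^4$, and for such pairs the hypothesis $b<a+57\sqrt a$ forces $a<b<2a$ (indeed $b>10^4$ together with $b<a+57\sqrt a$ makes $a$ large enough that $57\sqrt a<a$), so that \cite[Theorem 1]{MC-BF} applies. By \cite[Lemma 1]{MC-BF} every $c$ completing $\{a,b\}$ to a $D(4)$-triple is a term of one of the two sequences $(c_\nu^+)$, $(c_\nu^-)$; I would list all these values in a single increasing sequence and record the elementary gap principle coming from the recurrence, namely that $c_\nu^\pm$ grows like $(ab)^{\nu-1}$, so $c_\nu^\pm\approx b^{2\nu-1}$ when $a\approx b$. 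In particular $c<b^6$ is equivalent to $\nu\le 3$, leaving only the six values $c_1^\pm,c_2^\pm,c_3^\pm$ in that range.

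The core of the argument is the statement $P(c)$: the triple $\{a,b,c\}$ has no extension $d$ with $c<d$ and $d\neq d_+$, which I would prove by induction on the position of $c$ in the combined increasing sequence. Fix $c$ and assume $P(c'')$ for every admissible $c''<c$. Suppose, for contradiction, that $\{a,b,c\}$ has such an extension; since $d_-<c$ and $d_+$ is the least extension of $\{a,b,c\}$ exceeding $c$, we must have $d>d_+$, i.e.\ the quadruple is irregular. To invoke \cite[Theorem 1]{MC-BF} I must check its hypothesis, that there is no $D(4)$-quadruple $\{a,b,c',c\}$ with $0<c'<d_-$. If such a $c'$ existed, then $c'<d_-<c$ and $c$ would be an extension of the triple $\{a,b,c'\}$; by the inductive hypothesis $P(c')$ the only extension of $\{a,b,c'\}$ exceeding $c'$ is the regular one, so $c=d_+(a,b,c')$, which is equivalent to $c'=d_-(a,b,c)$ and contradicts $c'<d_-$. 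Hence the hypothesis holds and \cite[Theorem 1]{MC-BF} gives $c<b^6$.

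It remains to contradict the existence of an irregular $d$ once $c<b^6$, i.e.\ for the six values $c_1^\pm,c_2^\pm,c_3^\pm$; for all larger $c$ the bound $c<b^6$ already contradicts $c\ge b^6$, closing the induction. This final, explicit step is the one I expect to be the main obstacle: \cite[Theorem 1]{MC-BF} only bounds $c$ and does not by itself exclude an irregular extension, so for each of these few small triples I would pass to the attached system of simultaneous Pellian equations, express the admissible $d$ as an intersection of two binary recurrence sequences, and use the gap principle to show that the smallest element beyond $d_+$ is already too large to be compatible with $c<b^6$ (equivalently, a Baker--Davenport reduction over this very short range leaves only $d=d_+$). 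This proves $P(c)$ for all $c$ and hence the theorem; the quintuple assertion of the abstract then follows, since a fifth element of a $D(4)$-tuple built on $\{a,b\}$ would furnish an irregular quadruple, which we have just excluded.
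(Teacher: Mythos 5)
Your structural skeleton matches the paper's: reduce to $b>10^4$ (hence $a<b<2a$) via \cite[Lemma 3]{MC-BF}, enumerate the candidates $c$ via \cite[Lemma 1]{MC-BF}, and verify the hypothesis of \cite[Theorem 1]{MC-BF} by induction along the increasing sequence of $c$'s so that only the candidates below $b^6$ survive. That inductive bootstrap is exactly what the paper does (informally, in Sections 1--2). One small slip there: because the minus-sequence starts from $c_1^-=a+b-2r$, which is bounded by roughly $813$ under the hypothesis $b<a+57\sqrt a$, the value $c_4^-$ can still lie below $b^6$; the paper must (and does) treat the seven cases $c_1^\pm,c_2^\pm,c_3^\pm,c_4^-$, not your six. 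Your heuristic $c_\nu^\pm\approx b^{2\nu-1}$ is only valid for the plus-sequence.

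The genuine gap is your final step, which is where essentially all of the paper's work lives. Having reduced to finitely many candidate values of $c$ \emph{for each pair} $\{a,b\}$, you still face infinitely many pairs $\{a,b\}$, so no ``Baker--Davenport reduction over this very short range'' is available: that method needs explicit numerical (or at least parametric, with bounded parameter) input. Moreover, your stated contradiction --- that ``the smallest element beyond $d_+$ is already too large to be compatible with $c<b^6$'' --- conflates the two unknowns: the bound $c<b^6$ constrains $c$, not $d$, and an irregular $d$ has no a priori upper bound, so no gap principle alone can exclude it. What the paper actually does is: (i) pin down the fundamental solutions of the two Pell-type equations (Lemma 2.1), (ii) derive a linear form $\Lambda$ in \emph{two} logarithms from $v_{m'}=w_{n'}$ together with a lower bound for $n'$ in terms of $\sqrt a$ (Lemmas 3.1, 3.2) --- this is where the closeness $b-a<57\sqrt a$ is used quantitatively, via $\log\alpha_2-\log\alpha_1<57/(2\sqrt a\log\alpha_1)\cdot\log\alpha_1$ --- , (iii) apply Mignotte's two-logarithm theorem to obtain the absolute bound $a<6.55\cdot 10^{11}$ (Proposition 3.4), and only then (iv) exploit $(r-a)^2\equiv 4\pmod a$ and $(r-a)^2<813a$ to package the remaining pairs into $3691$ one-parameter families $\{a(k),b(k)\}$ on which Baker--Davenport reduction can actually be executed by computer. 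Steps (ii)--(iv) are absent from your proposal, and without them the argument does not close.
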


It obviously implies the following Corollary.

\begin{cor} The $D(4)$-pair $\{a,b\}$ with $a<b<a+57\sqrt{a}$ cannot be extended to a quintuple.
\end{cor}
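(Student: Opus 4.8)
The Corollary is immediate once Theorem~\ref{thm:main1} is available, so I treat it first and then spend the bulk of the plan on the Theorem, on which it rests. If the pair $\{a,b\}$ with $a<b<a+57\sqrt a$ could be extended to a quintuple, order its elements as $a<b<c<d<e$; then $d$ and $e$ are two distinct elements exceeding $c$, and applying Theorem~\ref{thm:main1} to the quadruples $\{a,b,c,d\}$ and $\{a,b,c,e\}$ would force $d=e=a+b+c+\tfrac12(abc+rst)$, a contradiction. Thus the Corollary follows, and the real content is the proof of Theorem~\ref{thm:main1}, which I outline next.

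The plan is to argue by contradiction: suppose $\{a,b,c,d\}$ is a $D(4)$-quadruple with $c<d$ but $d\neq d_+$. Since $d_+$ is the smallest extension of the triple $\{a,b,c\}$ exceeding $c$, this means the quadruple is irregular with $d>d_+$. First I would invoke \cite[Lemma 3]{MC-BF} to reduce to the range $b>10^4$, so that $a<b<2a$ and \cite[Lemma 1]{MC-BF} applies; then $c=c_\nu^{\pm}$ for some $\nu\geq1$ and a choice of sign. The aim is to show that no such $c_\nu^{\pm}$ admits an extension with $d>d_+$.

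The engine is \cite[Theorem 1]{MC-BF}: provided there is no $D(4)$-quadruple $\{a,b,c',c\}$ with $0<c'<d_-$, the existence of $d>d_+$ forces $c<b^6$. I would verify this no-intermediate-$c'$ hypothesis by induction, processing the values $c_\nu^{\pm}$ in increasing order: for the smallest admissible $c$ the quantity $d_-$ is too small to leave room for any $c'$, and for each later $c$ the only candidates $c'$ are earlier terms of the two sequences, whose pairing with $c$ (whether $c'c+4$ is a square) is governed by the recurrences, so that the immediate predecessor $d_-$ is the only value below $c$ forming a quadruple with it. Granting the hypothesis, Theorem~1 gives $c<b^6$. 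Since the recurrence $c_{\nu+2}^{\pm}=(ab+2)c_{\nu+1}^{\pm}-c_\nu^{\pm}+2(a+b)$ makes $c_\nu^{\pm}$ grow like $(ab+2)^\nu$, and $ab>a^2$ while $b^6<64a^6$ in the range $a<b<2a$, $b>10^4$, one checks $c_\nu^{\pm}\geq b^6$ already for $\nu\geq4$. Hence an irregular extension can occur only for $\nu\in\{1,2,3\}$, i.e. for the finitely many explicit values $c_1^{\pm},c_2^{\pm},c_3^{\pm}$.

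The remaining, and hardest, step is to rule out an irregular extension for each of these small values, where the bound $c<b^6$ is of no help. Here I would work directly with the simultaneous Pell equations attached to $\{a,b,c,d\}$, expressing $d$ through the binary recurrences coming from $ad+4=\square$ and $cd+4=\square$, and show that the two recurrence sequences meet only at $d=d_+$. The closeness hypothesis $b<a+57\sqrt a$ is exactly what makes this tractable: it pins $c_1^{\pm},c_2^{\pm},c_3^{\pm}$ down as near-explicit expressions in $a$ and $r$ and gives sharp control of the fundamental solutions, so that a gap principle together with congruence conditions modulo $c$ — and, for any residual range, a linear-forms-in-logarithms estimate followed by Baker--Davenport reduction — forces $d=d_+$. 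I expect precisely this last step to be the main obstacle, since the three small values of $\nu$ lie outside the scope of \cite[Theorem 1]{MC-BF} and must be handled by hand, with all estimates kept uniform in $a$ under only the mild constraint $b<a+57\sqrt a$ rather than over a fixed finite list of triples.
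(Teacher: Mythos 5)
Your deduction of the Corollary from Theorem~\ref{thm:main1} is correct and is exactly the paper's one-line argument: in a quintuple $a<b<c<d<e$ both $\{a,b,c,d\}$ and $\{a,b,c,e\}$ would force $d=e=d_+$. One concrete slip in your supporting outline of the Theorem, though: the bound $c<b^6$ does \emph{not} eliminate all of $\nu\geq 4$ --- since $c_1^-=\bigl((b-a)^2-16\bigr)/c_1^+$ can be very small under the hypothesis $b<a+57\sqrt{a}$, the term $c_4^-$ may still lie below $b^6$, and the paper accordingly works with the list $c_1^-,c_1^+,c_2^-,c_2^+,c_3^-,c_3^+,c_4^-$; restricting to $\nu\in\{1,2,3\}$ would miss the case $c=c_4^-$.
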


In the proof of Theorem we use standard methods, i.e. transforming the problem of the extension of $D(4)$-triple into the finding of the intersection of binary recurrence sequences which is then solved using congruence method together with Baker's theory of linear forms in logarithms, and in the end we do Baker-Davenport reduction method based on continued fractions. Let us mention that analogous result in the case of $D(1)$-$m$-tuples was very recently proved in \cite{FFT}. In the end we discuss if this method can work in other cases too. Also, the important fact is that we use the linear forms in two logarithms instead in three which gives us much better bounds. However, for this method to be used, it is necessary that the elements $a$ and $b$ are near to each other other and, therefore, it cannot be used in general. That method was already used in several papers and was firstly introduced in \cite{HT}.

\section{Preliminaries}
Let $\{a,b,c\}$ be a $D(4)$-triple with $a<b<57\sqrt{a}$. As we mentioned in the last Section, we have to consider only what is happening with $c<b^6$. It is easy to check that $c_4^+>b^6$, so we have to consider only $c=c_1^-,c_1^+,c_2^-,c_2^+,c_3^-,c_3^+,c_4^-$. We can also from now on assume that there does not exist a $D(4)$-quadruple $\{a,b,c',c\}$ with $0<c'<d_-$ because if we prove that we can have only the known extensions to a quadruple (with $d=d_{\pm}$), for $c\leq c_4^-$, we prove that assumption is valid for all $c$'s combining it with the proof of \cite[Theorem 1]{MC-BF}.

Let now $\{a,b,c,d\}$ be a $D(4)$-quadruple with $\max\{a,b,c\}<d$. Then, there exist positive integers $x,y,z$ such that $ad+4=x^2$, $bd+4=y^2$, $cd+4=z^2$.
Eliminating $d$ from these relations, we get
\begin{align}
ay^2-bx^2&=4(a-b),\label{D:yx}\\
az^2-cx^2&=4(a-c),\label{D:zx}\\
bz^2-cy^2&=4(b-c).\label{D:zy}
\end{align}
Since $a<b<a+57\sqrt{a}$, \cite[Lemma 1]{MC-BF} implies that the positive solutions of the equation (\ref{D:yx}) are given by
\begin{align}\label{sol:yx}
y\sqrt{a}+x\sqrt{b}=(\pm 2\sqrt{a}+2\sqrt{b})\left(\frac{r+\sqrt{ab}}{2}\right)^l
\end{align}
with some non-negative integer $l$.
So we have $x=p_l$, $y=V_l$, where
\begin{align}
p_0&=2,\enskip p_1=r \pm a,\enskip p_{l+2}=rp_{l+1}-p_l,\label{seq:x1}\\
V_0&=\pm2,\,V_1=b \pm r,\,V_{l+2}=rV_{l+1}-V_l.\label{seq:y1}
\end{align}
Furthermore, by \cite[Lemma 1]{rocky} all positive solutions of the equations (\ref{D:zx}) and (\ref{D:zy}) are given by
\begin{align}
z\sqrt{a}+x\sqrt{c}&=(z_0\sqrt{a}+x_0\sqrt{c})\left(\frac{s+\sqrt{ac}}{2}\right)^m,\label{sol:zx}\\
z\sqrt{b}+y\sqrt{c}&=(z_1\sqrt{b}+y_1\sqrt{c})\left(\frac{t+\sqrt{bc}}{2}\right)^n,\label{sol:zy}
\end{align}
where $m,n$ are non-negative integers, and $(z_0,x_0)$, $(z_1,y_1)$ are fundamental solutions of (\ref{D:zx}) and (\ref{D:zy}) respectively. Moreover, we have the following estimates for the fundamental solutions
\begin{align}
2 \leq \,&x_0 <\sqrt{s+2},\quad
2 \leq \,|z_0| <\sqrt{\frac{c\sqrt{c}}{\sqrt{a}}},\label{in:xz}\\
2 \leq \,&y_1 <\sqrt{t+2},\quad
2 \leq \,|z_1| <\sqrt{\frac{c\sqrt{c}}{\sqrt{b}}},\label{in:yz}
\end{align}
if $c>b$.
In the case where $c<b$, which in our case also implies $c<a$, positive solutions of the equations \eqref{D:zx} and (\ref{D:zy}) are given by \eqref{sol:zx} and \eqref{sol:zy} with the estimates
\begin{align}
2 \leq z_0<\sqrt{s+2},\quad 2 \leq |x_0|<\sqrt{\frac{a\sqrt{a}}{\sqrt{c}}},\label{in:zx}\\
2 \leq z_1<\sqrt{t+2},\quad 2 \leq |y_1|<\sqrt{\frac{b\sqrt{b}}{\sqrt{c}}}.\label{in:zy}
\end{align}
However, in all cases, we have $z=v_m=w_n$, where
\begin{align}
v_0&=z_0,\enskip v_1=\frac{1}{2}\left(sz_0+cx_0\right),\enskip v_{m+2}=sv_{m+1}-v_m,\notag\\
w_0&=z_1,\enskip w_1=\frac{1}{2}\left(tz_1+cy_1\right),\enskip v_{n+2}=tw_{n+1}-w_n.\label{seq:w}
\end{align}
If necessary, we will also consider $x=q_m$ and $y=W_n$, where
\begin{align}
q_0&=x_0,\enskip q_1=\frac{1}{2}\left(sx_0+az_0\right),\enskip q_{m+2}=sq_{m+1}-q_m,\label{seq:x2}\\
W_0&=y_1,\enskip W_1=\frac{1}{2}\left(ty_1+bz_1\right),\enskip W_{n+2}=tW_{n+1}-W_n.\label{seq:y2}
\end{align}
%
In the next Lemma we will compute the initial values of the given sequences.
\begin{lem}\label{lem:fs}
\begin{itemize}
\item[{\rm (1)}]
If $c=c_1^{-}$, we have $v_{2m+1}\neq w_{2n}$, $v_{2m}\neq w_{2n+1}$ and $v_{2m+1}\neq w_{2n+1}$.
Moreover, if $v_{2m}=w_{2n}$, then $z_0=z_1=2$.
\item[{\rm (2)}] If $c=c_1^{+}$, we have $v_{2m+1}\neq w_{2n}$, $v_{2m}\neq w_{2n+1}$ and $v_{2m+1}\neq w_{2n+1}$.
Moreover, if $v_{2m}=w_{2n}$, then $z_0=z_1$ and $|z_0|=2$.
\item[{\rm (3)}] If $c \in \{c_2^-,c_2^+,c_3^-,c_3^+,c_4^-\}$, then it is enough to consider the following:
\begin{itemize}
\item[{\rm (i)}] If $v_{2m}=w_{2n}$, then $z_0=z_1$ and $|z_0|=2$.
\item[{\rm (ii)}] If $v_{2m+1}=w_{2n+1}$, then $|z_0|=t$ and $|z_1|=s$ with $z_0z_1>0$.
\end{itemize}
\end{itemize}
\end{lem}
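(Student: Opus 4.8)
The plan is to reduce every claim to a comparison of the sequences $v_m$ and $w_n$ modulo $c$. From \eqref{sol:zx} and its conjugate one gets the closed form $2v_m=z_0(\alpha^m+\beta^m)+cx_0\,\frac{\alpha^m-\beta^m}{\alpha-\beta}$, where $\alpha=\tfrac{s+\sqrt{ac}}2$, $\beta=\tfrac{s-\sqrt{ac}}2$, $\alpha\beta=1$. Since $s^2=ac+4\equiv4\pmod c$, the sequence $\alpha^m+\beta^m$ is $\equiv2,s,2,s,\dots\pmod c$, so $v_{2m}\equiv z_0$ and $2v_{2m+1}\equiv sz_0\pmod c$; the same computation with $t^2=bc+4$ gives $w_{2n}\equiv z_1$ and $2w_{2n+1}\equiv tz_1\pmod c$. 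These four congruences are the engine of the argument (the spurious factor $2$ is absorbed by working modulo $2c$ when $c$ is even).

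Next I would pin down the admissible fundamental solutions. A solution of \eqref{D:zx} encodes a common extension $e=(z^2-4)/c$ of the pair $\{a,c\}$, and the bounds \eqref{in:xz}, \eqref{in:zx} confine $e$ to a short initial range. Apart from $e=0$, which gives $(z_0,x_0)=(\pm2,2)$, the triple itself supplies $e=b$, giving $(z_0,x_0)=(\pm t,r)$ since $at^2-cr^2=a(bc+4)-c(ab+4)=4(a-c)$; symmetrically \eqref{D:zy} admits $(z_1,y_1)\in\{(\pm2,2),(\pm s,r)\}$. Checking that no other fundamental solutions meet the stated bounds leaves $z_0\in\{\pm2,\pm t\}$ and $z_1\in\{\pm2,\pm s\}$. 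The size of $c$ is decisive: for $c=c_1^+$ one computes $s=a+r$, $t=b+r$ with $s+t=c$, both $\asymp a$, so $|z_0|=t$ fails $|z_0|<(c\sqrt c/\sqrt a)^{1/2}$ and only the class $|z_0|=2$ remains; for $c=c_1^-$ one is in the range $c<a<b$ with $s=r-a$, $t=b-r$, $t-s=c$, and $2\le z_0<\sqrt{s+2}$ forces the unique fundamental solution $z_0=z_1=2$; while for $c\in\{c_2^\pm,c_3^\pm,c_4^-\}$ the classes $\pm t,\pm s$ do satisfy the bounds and must be retained.

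The statements then drop out by matching residues. If $v_{2m}=w_{2n}$, then $z_0\equiv z_1\pmod c$, and $|z_0|+|z_1|<c$ forces $z_0=z_1$; as $s\ne t$ and, in the range $c>b$, $2<s,t<c-2$, the only class common to the two lists is $\pm2$, so $z_0=z_1$ with $|z_0|=2$, which is (2) and (3)(i), and in the positive range of (1) gives $z_0=z_1=2$. If $v_{2m+1}=w_{2n+1}$, then $sz_0\equiv tz_1\pmod c$; the combinations involving a $\pm2$ would force $s\equiv\pm2$ or $t\equiv\pm2\pmod c$, so $z_0=\varepsilon t$, $z_1=\delta s$, and $st(\varepsilon-\delta)\equiv0$ then gives $\varepsilon=\delta$, i.e.\ $|z_0|=t$, $|z_1|=s$, $z_0z_1>0$, which is (3)(ii). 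The mixed parities $v_{2m+1}=w_{2n}$ and $v_{2m}=w_{2n+1}$ likewise force $s\equiv\pm2$ or $t\equiv\pm2$ and are impossible. For $c=c_1^{\pm}$, where only the class $\pm2$ survives, these computations leave no solution, proving the three inequalities in (1) and (2).

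The hard part will be two points hidden in the second step. First, the standing extension $e=d_-$ of $\{a,b,c\}$ a priori produces a further solution $z_0=z_1=\sqrt{cd_-+4}$; since $d_-$ equals the preceding chain element $c_{\nu-1}^{\pm}$, one must use the running hypothesis that no $D(4)$-quadruple $\{a,b,c',c\}$ has $0<c'<d_-$, together with the fact that the $c_\nu^{\pm}$ are treated in increasing order, to confirm that this is only the known regular coincidence and contributes no new class. Second, the identities $s+t=c$ for $c_1^+$ and $t-s=c$ for $c_1^-$ mean $s\equiv\mp t\pmod c$, so the odd-odd congruence degenerates and admits the accidental residue match $z_0=2$, $z_1=-2$; ruling out the corresponding genuine equality $v_{2m+1}=w_{2n+1}$ requires a finer input, either the explicit low-index terms (here $v_1=s+c$, which for $c_1^-$ equals $t$) or a congruence to the modulus $c^2$. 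Dispatching these, along with the routine checks that $s,t,s\pm t$ avoid the residues $0,\pm2,\pm4\pmod c$ via $s^2\equiv t^2\equiv4\pmod c$, is where the real work lies.
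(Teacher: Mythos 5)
There is a genuine gap, and it sits precisely in case (1), $c=c_1^-$. Your entire mechanism is residue matching modulo $c$ together with the size bounds on the fundamental solutions. That works when $c>b$: there \eqref{in:xz}--\eqref{in:yz} give $|z_0|,|z_1|<c^{3/4}$, so $z_0\equiv z_1\pmod{c}$ really does force $z_0=z_1$, and the classes $\pm2,\pm s,\pm t$ can be separated modulo $c$; this is essentially the paper's route to (2) and (3) (via its references to the discussion after Lemma 1 of the Glasnik paper and Lemma 2 of Ba\v{c}i\'c--Filipin). But $c_1^-=a+b-2r=((b-a)^2-16)/(a+b+2r)<(57\sqrt a)^2/(4a)<813$ is bounded by an absolute constant, while the relevant estimates for $c<a$ are \eqref{in:zx}--\eqref{in:zy}, which only give $2\le z_0<\sqrt{s+2}\asymp(ac)^{1/4}$ and $2\le z_1<\sqrt{t+2}\asymp(bc)^{1/4}$ --- both vastly larger than $c$. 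So ``$|z_0|+|z_1|<c$ forces $z_0=z_1$'' fails outright, and your claim that ``$2\le z_0<\sqrt{s+2}$ forces the unique fundamental solution $z_0=z_1=2$'' is exactly the assertion to be proved, not a consequence of the bound: any common extension $d'$ of the pair $\{a,c\}$ with $0\le d'\lesssim\sqrt{a/c}$ yields an admissible $(z_0,x_0)$ inside that range, and nothing in your argument excludes such $d'$.

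The paper closes this gap by switching to the other two coordinates. Since $a<b<a+57\sqrt a$, all solutions of \eqref{D:yx} are known exactly by \eqref{sol:yx}, so one compares $y=V_l=W_n$ modulo $b$ (where $|y_1|<b^{3/4}/c^{1/4}<b-2$ genuinely pins down the residue) to get $y_1=\pm2$, hence $z_1=2$ from \eqref{D:zy}; then comparing $x=p_l=q_m$ modulo $a$ gives $x_0=2$ \emph{or} $r-a=2$. The second alternative is a real exceptional case, $b=a+4$, which these congruences cannot eliminate and which is disposed of only by citing the separate complete treatment of the pairs $\{k-2,k+2\}$. Your proposal contains neither the mod-$a$/mod-$b$ step nor this exceptional case. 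To your credit, you correctly identify the role of the hypothesis on quadruples $\{a,b,c',c\}$ with $0<c'<d_-$ in trimming the fundamental solutions for case (3), and you correctly flag the degeneracies $s+t=c$ and $t-s=c$ for $c_1^{\pm}$; but announcing that this is ``where the real work lies'' is not the same as doing that work, and the tools you propose for it (mod-$c$, or mod-$c^2$) are not the ones that actually close case (1).
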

\begin{proof}
The cases $(1)$ and $(2)$ can be proven in the same way as discussion after \cite[Lemma 1]{glasnik} using the estimates (\ref{in:xz}), (\ref{in:yz}), (\ref{in:zx}) and (\ref{in:zy}) together with \cite[Lemma 1]{rocky}. It implies right away the statement $(2)$, while in $(1)$ it implies $y_1=\pm 2$. Then, we conclude $z_1=2$. To get $z_0=2$ we have to prove that $x_0=2$ which follows from $x=p_l=q_m$ considering congruences modulo $a$ and noticing that it would give us $x_0=2$ or $r-a=2$. The latter implies $b=a+4$, the case which was solved completely in \cite{JCNT} considering the extension of $D(4)$-pairs $\{k-2,k+2\}$.

The case $(3)$ can be proven the same way as \cite[Lemma 2]{MC-BF}.
\end{proof}

The following Lemma can be proven easily.
\begin{lem}\label{lem:m>n}
If $v_{m'}=w_{n'}$ has a solution with $n' \geq 3$, then $m'>n'$.
\end{lem}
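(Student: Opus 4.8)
The plan is to prove the statement by comparing the growth rates of the two binary recurrence sequences $(v_m)$ and $(w_n)$ from \eqref{seq:w}, exploiting that $t>s$. Since $a<b$ we have $ac<bc$, so $s^2=ac+4<bc+4=t^2$ and hence $s<t$. Introducing the dominant roots $\alpha=\tfrac12\!\left(s+\sqrt{s^2-4}\right)$ and $\gamma=\tfrac12\!\left(t+\sqrt{t^2-4}\right)$ of the characteristic equations of the recurrences in \eqref{seq:w}, this gives $1<\alpha<\gamma$. Using $\sqrt{s^2-4}=\sqrt{ac}$ and $\sqrt{t^2-4}=\sqrt{bc}$, so that $\tfrac12(s+\sqrt{ac})=\alpha$ and $\tfrac12(t+\sqrt{bc})=\gamma$, the representations \eqref{sol:zx} and \eqref{sol:zy} yield the Binet-type closed forms
$$v_m=\frac{(z_0\sqrt a+x_0\sqrt c)\,\alpha^m+(z_0\sqrt a-x_0\sqrt c)\,\alpha^{-m}}{2\sqrt a},\qquad w_n=\frac{(z_1\sqrt b+y_1\sqrt c)\,\gamma^n+(z_1\sqrt b-y_1\sqrt c)\,\gamma^{-n}}{2\sqrt b}.$$

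From these I would derive two-sided bounds of the shape $c_1\alpha^m\le v_m\le c_2\alpha^m$ and $c_3\gamma^n\le w_n\le c_4\gamma^n$, valid for the relevant positive indices. The dominant coefficients are positive, while the subdominant terms satisfy $|\alpha^{-m}|<1$ and $|\gamma^{-n}|<1$, and the products of the two coefficients equal $az_0^2-cx_0^2=4(a-c)$ and $bz_1^2-cy_1^2=4(b-c)$ by \eqref{D:zx} and \eqref{D:zy}; together with the fundamental-solution estimates \eqref{in:xz}, \eqref{in:yz} (case $c>b$) or \eqref{in:zx}, \eqref{in:zy} (case $c<b$), this makes the constants $c_1,\dots,c_4$ explicit and mildly dependent on $a,b,c$. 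These bounds are the routine part and I would not grind through them.

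Finally, suppose $v_{m'}=w_{n'}$ with $n'\ge 3$ and assume for contradiction that $m'\le n'$. Since $\alpha>1$ and $m'\le n'$, the upper bound gives $v_{m'}\le c_2\alpha^{m'}\le c_2\alpha^{n'}$, while the lower bound gives $w_{n'}\ge c_3\gamma^{n'}$; the equality $v_{m'}=w_{n'}$ then forces $(\gamma/\alpha)^{n'}\le c_2/c_3$. Because $\gamma/\alpha$ is bounded away from $1$ (as $t^2-s^2=(b-a)c$ is large), this inequality fails once $n'\ge 3$, a contradiction, so $m'>n'$. The main obstacle is purely bookkeeping: one must track the constants $c_2/c_3$ precisely enough that the threshold comes out to be exactly $n'\ge 3$, and one must verify the argument uniformly in both regimes $c>b$ and $c<b$, paying attention to the possible negative sign of the fundamental solution $z_0$ at the smallest indices.
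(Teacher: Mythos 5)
The paper gives no written proof of this lemma (it is only asserted to be ``easy''), so I can only judge your argument on its own terms. Your overall strategy --- compare the growth of $(v_m)$ and $(w_n)$ using $s<t$ --- is the natural one, but the final step as you justify it does not work in this paper's regime. You claim that $(\gamma/\alpha)^{n'}\le c_2/c_3$ must fail for $n'\ge 3$ ``because $\gamma/\alpha$ is bounded away from $1$''. It is not: here $\gamma/\alpha=\alpha_2/\alpha_1<\sqrt{b/a}<1+29/\sqrt{a}$, which tends to $1$ as $a\to\infty$ (the smallness of $\log\alpha_2-\log\alpha_1$ is precisely what Lemma~\ref{lem:lb-1} and the two-logarithm method exploit). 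So $(\gamma/\alpha)^{3}$ is only $1+O(1/\sqrt{a})$, while the ratio $c_2/c_3$ of your Binet coefficients is itself of size roughly $\gamma/\alpha$ in the typical case $z_0=z_1=2$, $c>b$ (namely $\frac{1+\sqrt{c/a}}{1+\sqrt{c/b}}\approx\sqrt{b/a}$), and can be far larger if you only control the fundamental solutions by the crude bounds \eqref{in:xz}--\eqref{in:zy}, since $z_1\sqrt{b}+y_1\sqrt{c}$ may then be small and positive. The inequality you need is therefore a comparison of two quantities both equal to $1+O(1/\sqrt{a})$, and it cannot be settled by the qualitative remark that $t^2-s^2=(b-a)c$ is large; the ``bookkeeping'' you defer is actually the entire content of the proof. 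That the threshold $n'\ge 3$ cannot come out of a sign-insensitive constant chase is shown by the fact that $m'=n'=1$ genuinely occurs: in the odd-indexed case of Lemma~\ref{lem:fs}~(3)(ii) with $z_0=-t$, $z_1=-s$ one computes $x_0=y_1=r$ and $v_1=w_1=\tfrac12(cr-st)$.

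A repairable route in the spirit of your proposal is to first invoke Lemma~\ref{lem:fs} to pin down the finitely many admissible initial pairs $(z_0,x_0)$, $(z_1,y_1)$, and then prove $w_n>v_n$ for all $n\ge 2$ directly by induction on the recurrences \eqref{seq:w}: writing $d_n=w_n-v_n$ one gets $d_{n+2}=t\,d_{n+1}-d_n+(t-s)v_{n+1}>0$ once $d_{n+1}\ge 0$, $d_{n+1}\ge d_n/t$ and $v_{n+1}>0$, which one checks at $n=0,1$ in each initial-value case. Since $(v_m)$ is increasing for $m\ge 1$, the equality $v_{m'}=w_{n'}>v_{n'}$ then forces $m'>n'$. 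This avoids any comparison of exponential rates that are too close together to separate, which is where your write-up currently breaks down.
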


Note that last Lemma implies $m'-n' \geq 2$.
\section{Linear forms in logarithms}
Now using standard method in linear forms in logarithms we can prove that if $c \in \{c_1^+,c_2^-,c_2^+,c_3^-,c_3^+,c_4^-\}$ and $v_{m'}=w_{n'}$, then
\begin{align}\label{lf1}
0<\Lambda<\alpha_1^{2-2m'},
\end{align}
where
\begin{align*}
\Lambda&=m' \log \alpha_1-n' \log \alpha_2+\log \mu
\end{align*}
and
\begin{align*}
\alpha_1&=\frac{s+\sqrt{ac}}{2},\enskip \alpha_2=\frac{t+\sqrt{bc}}{2},\enskip
\mu=\frac{\sqrt{b}(x_0\sqrt{c}+z_0\sqrt{a})}{\sqrt{a}(y_1\sqrt{c}+z_1\sqrt{b})}.
\end{align*}
Moreover, if $c=c_1^-$ and $v_{m'}=w_{n'}$, then
\begin{align}\label{lf2}
0<\Lambda<\alpha_2^{1-2n'}.
\end{align}
%
Remember that we consider only the case $a<b<a+57\sqrt{a}$ and $c\leq c_4^-$.
\begin{lem}\label{lem:lambda<}
\begin{itemize}
\item[{\rm (1)}] If $c=c_1^-<a-4$.
and $v_{m'}=w_{n'}$ has a solution with $n'\geq 3$, then
\[
m'\log \alpha_1-(n'+0.001)\log \alpha_2<0.
\]
\item[{\rm (2)}] If $c \in \{c_1^+,c_2^-,c_2^+,c_3^-,c_3^+,c_4^-\}$ and $v_{m'}=w_{n'}$ has a solution with $n' \geq 3$, then
\[
(m'-0.001)\log \alpha_1-n'\log \alpha_2<0.
\]
\end{itemize}
\end{lem}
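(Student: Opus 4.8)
The goal is to show that when $v_{m'}=w_{n'}$ has a solution, the linear form $\Lambda$ is not merely bounded above (as in \eqref{lf1} and \eqref{lf2}) but that a specific sharpened linear combination of $\log\alpha_1$ and $\log\alpha_2$ is strictly negative. Both parts of the Lemma are really statements comparing the sizes of $\alpha_1$ and $\alpha_2$ against the integers $m'$ and $n'$, with a small slack of $0.001$ built in, so I expect the proof to reduce to estimating these quantities explicitly in the regime $a<b<a+57\sqrt a$, $b>10^4$, and $c\leq c_4^-$.

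The plan is to start from the upper bounds already established. For part (2), the inequality $0<\Lambda<\alpha_1^{2-2m'}$ combined with $\Lambda=m'\log\alpha_1-n'\log\alpha_2+\log\mu$ gives
\[
m'\log\alpha_1-n'\log\alpha_2<-\log\mu+\alpha_1^{2-2m'}.
\]
Thus it suffices to show $\log\mu>0.001\log\alpha_1-\alpha_1^{2-2m'}$, i.e. essentially that $\mu$ is bounded below away from $1$ in a way that dominates the tiny $0.001\log\alpha_1$ term. Here I would estimate $\mu$ directly from its definition, using $\mu=\dfrac{\sqrt b\,(x_0\sqrt c+z_0\sqrt a)}{\sqrt a\,(y_1\sqrt c+z_1\sqrt b)}$ together with the bounds \eqref{in:xz}--\eqref{in:zy} on the fundamental solutions and the relation $a<b<2a$. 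The key observation is that the leading term of $\mu$ is governed by $\sqrt b\,x_0\sqrt c$ versus $\sqrt a\,y_1\sqrt c$, so $\mu$ is comparable to $(\sqrt b\,x_0)/(\sqrt a\,y_1)$, and since $x_0,y_1$ are of comparable size (both $\Theta(\sqrt s)$, $\Theta(\sqrt t)$) while $b\geq a$, one gets a lower bound on $\log\mu$ that exceeds $0.001\log\alpha_1$ once $c$ is large enough — which is guaranteed by $n'\geq3$ forcing $m'$ (hence $c$) to be large via Lemma~\ref{lem:m>n}. For part (1), the symmetric argument applies starting from $0<\Lambda<\alpha_2^{1-2n'}$, rearranging to bound $m'\log\alpha_1-(n'+0.001)\log\alpha_2$ above, and here one instead needs an upper bound on $\log\mu$ (equivalently a lower bound on $\log(1/\mu)$) absorbing the $0.001\log\alpha_2$ slack; the condition $c=c_1^-<a-4$ is exactly what controls the sign and size of the correction in this case.

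The heart of both parts is therefore a careful size comparison of $\alpha_1=\frac{s+\sqrt{ac}}{2}$ and $\alpha_2=\frac{t+\sqrt{bc}}{2}$, using $s^2=ac+4$ and $t^2=bc+4$, so that $\alpha_1\approx\sqrt{ac}$ and $\alpha_2\approx\sqrt{bc}$ and hence $\log\alpha_2-\log\alpha_1\approx\frac12\log(b/a)$, a small but positive quantity since $b>a$. I would make these approximations rigorous with explicit error terms valid for $c\geq c_1^{\pm}$ and $a>10^4/2$, then plug them into the rearranged inequalities. Because $n'\geq3$, the exponential error terms $\alpha_1^{2-2m'}$ and $\alpha_2^{1-2n'}$ are utterly negligible (they are smaller than any fixed positive constant), so the whole argument collapses to verifying a finite explicit inequality among $\log\alpha_1$, $\log\alpha_2$, and the logarithm of the $\mu$-ratio.

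The main obstacle I anticipate is bookkeeping the signs and the precise constant $0.001$: one must check that the lower bound on $\log\mu$ (respectively the control of $\log\mu$ in part (1)) genuinely beats the slack $0.001\log\alpha_i$ uniformly over the finitely many cases $c\in\{c_1^+,c_2^-,c_2^+,c_3^-,c_3^+,c_4^-\}$ and $c=c_1^-$, and over all admissible $a,b$. The danger is that $\mu$ could be very close to $1$ — this is why part (1) carries the extra hypothesis $c_1^-<a-4$, which separates $\mu$ from $1$; and for part (2) one must confirm that the factor $\sqrt b/\sqrt a$ times the fundamental-solution ratio stays bounded below by a constant strictly exceeding $1$ by more than the reduction tolerance. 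Once these explicit numerical estimates are in hand, the inequalities follow by direct substitution.
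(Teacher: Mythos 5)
Your overall strategy---substituting $\Lambda=m'\log\alpha_1-n'\log\alpha_2+\log\mu$ into the upper bounds \eqref{lf1}, \eqref{lf2}, noting that the exponential terms are negligible for $n'\geq 3$, and reducing the lemma to a numerical estimate on $\mu$---is exactly the paper's. But you have the direction of the required estimate on $\mu$ reversed, and this is a genuine gap. From $0<\Lambda<\alpha_1^{2-2m'}$ one gets $m'\log\alpha_1-n'\log\alpha_2<\alpha_1^{2-2m'}-\log\mu$, so the sufficient condition for part (2) is
\[
\log\mu>\alpha_1^{2-2m'}-0.001\log\alpha_1,
\]
i.e.\ $\mu$ must not lie too far \emph{below} $1$. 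You instead propose to prove $\log\mu>0.001\log\alpha_1-\alpha_1^{2-2m'}$, i.e.\ that $\mu$ exceeds $1$ by a margin dominating $0.001\log\alpha_1$, and you build your plan around establishing that margin from the fundamental solutions. That strengthened inequality is false in the relevant range: for the fundamental solutions occurring here one has $\mu-1=O\bigl((b-a)/a\bigr)=O(1/\sqrt{a})$, which for large $a$ (and the subsequent Proposition works with $a>10^{10}$) is far smaller than $0.001\log\alpha_1$. Likewise in part (1) you say one needs an upper bound on $\log\mu$ (a lower bound on $\log(1/\mu)$); the same rearrangement applied to \eqref{lf2} shows that what is needed is again a \emph{lower} bound, namely $\log\mu>\alpha_2^{1-2n'}-0.001\log\alpha_2$. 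The paper's proof is precisely this one-line verification: $b>10^4$ forces $a\geq 5700$, whence $\mu>0.99$ and $\alpha_2^5\log\alpha_2>10^4$, so $\Lambda<\alpha_2^{-5}<0.001\log\alpha_2+\log\mu$, which is the assertion of (1), and (2) is analogous.

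A secondary point: the comparison $\log\alpha_2-\log\alpha_1\approx\tfrac12\log(b/a)$, which occupies much of your write-up, plays no role in this lemma; it is the content of the next step (Lemma~\ref{lem:lb-1}). Once the sign of the required bound on $\mu$ is corrected, your argument does collapse to the finite numerical check you describe, and it goes through.
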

\begin{proof}
(1) Since $b>10^4$, it implies $a\geq5700$. Then, it is easy to see that $\mu>0.99$ and $\alpha_2^5\log \alpha_2>10^4$. Then, we conclude from \eqref{lf2} that
\[
\Lambda<\alpha_2^{-5}<0.001\log \alpha_2+\log \mu,
\]
from which the statement of Lemma follows. \par
$(2)$ can be proven in the same way.
\end{proof}

\begin{lem}\label{lem:lb-1}
Let $\nu'=m'-n'$.
\begin{itemize}
\item[{\rm (1)}] If $c=c_1^-$ and $v_{m'}=w_{n'}$ has a solution with $n' \geq 2$,
then
\[
n'>2/57(\nu'-0.001)\sqrt{a}\log \alpha_1-0.001.
\]
\item[{\rm (2)}] If $c \in\{c_1^+,c_2^-,c_2^+,c_3^-,c_3^+,c_4^-\}$ and $v_{m'}=w_{n'}$ has a solution with $n' \geq 3$, then
\[
n'>2/57(\nu'-0.001)\sqrt{a}\log \alpha_1.
\]
\end{itemize}
\end{lem}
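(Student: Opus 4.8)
The plan is to deduce both inequalities from Lemma~\ref{lem:lambda<} by comparing the two logarithms $\log\alpha_1$ and $\log\alpha_2$. The single auxiliary fact I need is a clean upper bound for their difference, namely
\[
\log\alpha_2-\log\alpha_1<\frac{57}{2\sqrt a}.
\]
Everything else is bookkeeping with the $0.001$ error terms already built into Lemma~\ref{lem:lambda<}, together with the substitution $m'=n'+\nu'$.

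First I would establish the auxiliary bound. Since $\alpha_1=(s+\sqrt{ac})/2$, $\alpha_2=(t+\sqrt{bc})/2$ with $s^2=ac+4$, $t^2=bc+4$, I claim $\alpha_2/\alpha_1<\sqrt{b/a}$. Cross-multiplying the equivalent inequality $(t+\sqrt{bc})\sqrt{ac}<(s+\sqrt{ac})\sqrt{bc}$ reduces, after cancelling the common term $\sqrt{ac}\,\sqrt{bc}$ and squaring, to $t^2ac<s^2bc$, i.e. $(bc+4)ac<(ac+4)bc$, i.e. $4ac<4bc$, which is just $a<b$ and therefore holds. Hence
\[
\log\alpha_2-\log\alpha_1<\tfrac12\log\frac ba=\tfrac12\log\Bigl(1+\frac{b-a}{a}\Bigr)<\frac{b-a}{2a}<\frac{57\sqrt a}{2a}=\frac{57}{2\sqrt a},
\]
using $\log(1+x)<x$ and the hypothesis $b-a<57\sqrt a$.

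With this in hand the two parts are pure algebra. For (2), Lemma~\ref{lem:lambda<}(2) gives $(m'-0.001)\log\alpha_1<n'\log\alpha_2$; writing $m'=n'+\nu'$ and subtracting $n'\log\alpha_1$ from both sides yields
\[
(\nu'-0.001)\log\alpha_1<n'(\log\alpha_2-\log\alpha_1)<n'\cdot\frac{57}{2\sqrt a},
\]
which rearranges to $n'>\tfrac{2}{57}(\nu'-0.001)\sqrt a\log\alpha_1$, as claimed. For (1), Lemma~\ref{lem:lambda<}(1) gives $m'\log\alpha_1<(n'+0.001)\log\alpha_2$; substituting $m'=n'+\nu'$ and writing $\log\alpha_2=\log\alpha_1+(\log\alpha_2-\log\alpha_1)$ on the right gives
\[
(\nu'-0.001)\log\alpha_1<(n'+0.001)(\log\alpha_2-\log\alpha_1)<(n'+0.001)\cdot\frac{57}{2\sqrt a},
\]
and rearranging produces exactly $n'>\tfrac{2}{57}(\nu'-0.001)\sqrt a\log\alpha_1-0.001$.

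The one point that needs separate care is that Lemma~\ref{lem:lambda<} is stated only for $n'\geq3$, whereas part~(1) claims the bound already for $n'\geq2$. The main obstacle is therefore the boundary case $n'=2$ of part~(1), which is not covered by the displayed argument: for $n'=2$ the upper estimate \eqref{lf2} only gives $\Lambda<\alpha_2^{-3}$ rather than $\alpha_2^{-5}$, so the inequality $m'\log\alpha_1<(n'+0.001)\log\alpha_2$ must be re-derived by hand, checking directly that $\alpha_2^{-3}<0.001\log\alpha_2+\log\mu$ with the explicit value of $\mu$ coming from Lemma~\ref{lem:fs}(1) (where $z_0=z_1=2$). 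Once that single case is verified, the comparison bound above finishes both statements uniformly. I expect the algebra to be routine and the verification of the $n'=2$ case to be the only genuinely delicate step.
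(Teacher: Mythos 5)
Your proof is correct and follows essentially the same route as the paper's: both arguments reduce the claim to the key bound $\alpha_2/\alpha_1<\sqrt{b/a}$ (equivalently $\log\alpha_2-\log\alpha_1<57/(2\sqrt{a})$, which the paper phrases as $(\alpha_2-\alpha_1)/\alpha_1<(b-a)/(\sqrt{a}(\sqrt{a}+\sqrt{b}))$) and then rearrange the inequalities of Lemma~\ref{lem:lambda<} after substituting $m'=n'+\nu'$. You are in fact slightly more careful than the paper, which applies Lemma~\ref{lem:lambda<}(1) to obtain part (1) for $n'\geq 2$ even though that lemma is stated only for $n'\geq 3$; the separate check you flag for the boundary case $n'=2$ is genuinely needed and is not carried out in the published proof either.
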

\begin{proof}
(1) By Lemma \ref{lem:lambda<}, we have
\begin{align*}
\frac{\nu'-0.001}{n'+0.001}&=\frac{m'}{n'+0.001}-1<\frac{\log \alpha_2}{\log \alpha_1}-1\\
                           &<\frac{\alpha_2-\alpha_1}{\alpha_1\log \alpha_1}<\frac{b-a}{\sqrt{a}(\sqrt{a}+\sqrt{b})\log \alpha_1}\\
                           &<\frac{57}{2\sqrt{a}\log \alpha_1},
\end{align*}
which proves the inequality. \par
$(2)$ can be proven similarly using Lemma \ref{lem:lambda<}.
\end{proof}

We are now ready to give an upper bound for $a$. To do that we use the important result from \cite{Mig}. For any nonzero algebraic number $\gamma$ of degree $d'$ over $\mathbb{Q}$,
\[
h(\gamma)=\frac{1}{d'}\left(\log|a_0|+\sum_{j=1}^{d'}\log \max \left\{1,\left|\gamma^{(j)}\right|\right\}\right)
\]
denotes its absolute logarithmic height,
where $a_0$ is the leading coefficient of the minimal polynomial of $\alpha$ over $\mathbb{Z}$ and $\gamma^{(j)}$ are the complex conjugates of $\gamma$.
\begin{thm}\textup{(\cite[Corollary of Theorem 2]{Mig})}\label{thm:Mign}
Let $\gamma_1$ and $\gamma_2$ be multiplicative independent positive real numbers.
For positive integers $b_1$ and $b_2$, define $\Lambda=b_1 \log \gamma_1-b_2 \gamma_2$.
Put $D=[\mathbb{Q}(\gamma_1,\gamma_2):\mathbb{Q}]/[\mathbb{R}(\gamma_1,\gamma_2):\mathbb{R}]$.
Let $\rho$, $\kappa$ and $a_i$ $($$i \in \{1,2\}$$)$ be positive real numbers with $\rho \geq 4$, $\kappa=\log \rho$,
\[
a_i \geq \max\{1,(\rho-1)\log|\gamma_i|+2Dh(\gamma_i)\}
\]
and
\[
a_1a_2 \geq \max \left\{20,4\kappa^2\right\}.
\]
Suppose that $h$ is a real number with
\[
h \geq \max\left\{3.5,1.5\kappa,D \left(\log \left(\frac{b_1}{a_2}+\frac{b_2}{a_1}\right)+\log \kappa+1.377 \right)+0.023 \right\},
\]
and put $\chi=h/\kappa$, $v=4\chi+4+1/\chi$.
Then, we have
\begin{align*}
\log|\Lambda| \geq -(C_0+0.06)(\kappa+h)^2a_1a_2,
\end{align*}
where
\[
C_0=\frac{1}{\kappa^3}\left\{\left(2+\frac{1}{2\chi(\chi+1)}\right)
\left(\frac13+\sqrt{\frac19+\frac{4\kappa}{3v}\left(\frac{1}{a_1}+\frac{1}{a_2}\right)+\frac{32\sqrt{2}(1+\chi)^{3/2}}{3v^2\sqrt{a_1a_2}}}\right)\right\}^2.
\]
\end{thm}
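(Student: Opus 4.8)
This is Mignotte's explicit lower bound for a linear form in two logarithms, quoted as a corollary of his general Theorem~2. One route is to deduce it by specializing the free parameters of the general two-logarithm estimate, but I would present the underlying argument directly by the interpolation-determinant method of Laurent--Mignotte--Nesterenko on which such bounds rest. The plan is to argue by contradiction: assume $\log|\Lambda|$ is \emph{smaller} than the asserted bound and derive an impossibility by confronting an analytic upper bound with an arithmetic lower bound for one and the same nonzero quantity. That quantity is an interpolation determinant $\Delta=\det\bigl(\varphi_i(\zeta_j)\bigr)$ built from a family of functions $\varphi_{(\tau,\lambda)}(z)=z^{\tau}\gamma_1^{\lambda z}$, with $0\le\tau<T$ and $0\le\lambda<L$, evaluated at a grid of points $\zeta_{(r,s)}=r+s\,b_1/b_2$. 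Here $\Lambda=b_1\log\gamma_1-b_2\log\gamma_2$ being small is exactly the relation $\gamma_1^{b_1}\approx\gamma_2^{b_2}$ that will be exploited, and the integers $T,L$ together with the interpolation data are the parameters to be optimized at the end.

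The analytic input is the source of smallness. Because $\Lambda$ is tiny, the value $\gamma_1^{(b_1/b_2)z}$ differs from $\gamma_2^{z}$ by a factor $\exp\bigl(O(z\Lambda/b_2)\bigr)$, so the columns of the evaluation matrix are nearly linearly dependent. Applying the maximum modulus principle through a Hermite-type (Schwarz) interpolation lemma on a disc of suitably chosen radius $R$, I would bound $|\Delta|$ from above, the determinant of an $N\times N$ matrix with $N=TL$ picking up a factor of order $R^{-\binom{N}{2}}$ together with a factor $\exp(-c\,R)$ coming from the near-dependence. The net estimate has the shape $\log|\Delta|\le -c_1(\text{main term})+c_2$, with a main term quadratic in $N$ and linear in $R$. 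It is in choosing $R$ and balancing it against $N$ that the quantity $v=4\chi+4+1/\chi$ and the powers of $\kappa=\log\rho$ first appear.

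The arithmetic input is twofold. First a \emph{nonvanishing} (zero-estimate) step: I must know $\Delta\neq0$. If $\Delta=0$ the $\varphi_{(\tau,\lambda)}$ would satisfy a nontrivial linear relation on the grid, which in the two-logarithm case can be ruled out by an explicit combinatorial zero estimate forcing $\gamma_1$ and $\gamma_2$ to be multiplicatively dependent, contradicting the hypothesis. Second, since $\Delta$ is, up to a controlled denominator, a nonzero algebraic number in $\mathbb{Q}(\gamma_1,\gamma_2)$, the Liouville inequality (the product formula together with the height bounds built into $a_i\ge(\rho-1)\log|\gamma_i|+2Dh(\gamma_i)$) gives $\log|\Delta|\ge -c_3(\text{height sum})$, the height sum being controlled by $a_1,a_2$, by $T,L$, and by $b_1,b_2$. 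The quantity $D=[\mathbb{Q}(\gamma_1,\gamma_2):\mathbb{Q}]/[\mathbb{R}(\gamma_1,\gamma_2):\mathbb{R}]$ records whether the field is real and enters through the number of archimedean places.

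Confronting the two estimates, $-c_1(\text{main})+c_2\ge\log|\Delta|\ge -c_3(\text{height sum})$, yields an inequality that is violated once $\Lambda$ drops below a threshold; solving for that threshold produces a bound of the exact shape $\log|\Lambda|\ge -(C_0+0.06)(\kappa+h)^2a_1a_2$. The hypotheses $\rho\ge4$, $a_1a_2\ge\max\{20,4\kappa^2\}$ and the explicit lower bound on $h$ (hence on $\chi=h/\kappa$) are precisely the feasibility conditions that make the chosen parameters admissible and keep every error term within the stated constants. The main obstacle is not the architecture, which is standard, but the \emph{explicit optimization}: extracting the sharp numerical constant (the additive $0.06$ and the closed form of $C_0$ with its $1/\kappa^{3}$ prefactor and nested square root) demands a delicate simultaneous choice of $T$, $L$ and the radius $R$, a sharp Schwarz lemma with multiplicities, and careful bookkeeping of every archimedean and non-archimedean contribution in the Liouville step, exactly as carried out in \cite{Mig}.
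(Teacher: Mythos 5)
The first thing to note is that the paper contains no proof of this statement at all: Theorem \ref{thm:Mign} is quoted verbatim from \cite{Mig} (``Corollary of Theorem 2''), and the author's entire justification is that citation. So there is no internal argument to compare yours against; the honest benchmark is whether your attempt constitutes an independent proof. Your outline does correctly identify the machinery that the cited result rests on --- the interpolation determinant $\Delta=\det(\varphi_i(\zeta_j))$ with $\varphi_{(\tau,\lambda)}(z)=z^{\tau}\gamma_1^{\lambda z}$ evaluated at points $r+s\,b_1/b_2$, the Schwarz-lemma upper bound exploiting $\gamma_1^{b_1}\approx\gamma_2^{b_2}$, the zero estimate ruling out $\Delta=0$ via multiplicative independence, and the Liouville lower bound in which $D$ and the height terms $(\rho-1)\log|\gamma_i|+2Dh(\gamma_i)$ enter. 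That is indeed the Laurent--Mignotte--Nesterenko architecture, and you also silently repair the statement's typo ($\Lambda=b_1\log\gamma_1-b_2\gamma_2$ should read $b_1\log\gamma_1-b_2\log\gamma_2$), which is correct.

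The genuine gap is that for a theorem of this kind the architecture is not the content; the content is the explicit constants, and your attempt derives none of them. Every feature that distinguishes this corollary from a generic two-logarithm bound --- the closed form of $C_0$ with its $1/\kappa^3$ prefactor and nested square root, the additive $0.06$, the shape $(\kappa+h)^2a_1a_2$, the admissibility thresholds $\rho\geq 4$, $a_1a_2\geq\max\{20,4\kappa^2\}$, and the precise lower bound on $h$ with its constants $1.377$ and $0.023$ --- is exactly what you defer, in your closing sentence, to ``the explicit optimization \ldots carried out in \cite{Mig}.'' For a blind proof that is circular: the hard part is outsourced to the very reference whose statement you are asked to establish. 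You never specify the parameters $T$, $L$, $R$ (or the choices of $\rho$ and the multiplicity structure) from which these constants would emerge, so nothing in the statement is actually verified. Concretely, if one of the numerical constants in the statement were wrong, your argument would not detect it. As it stands your text is a competent summary of why such a bound is provable, i.e.\ an expanded form of the citation the paper already gives, rather than a proof; to close the gap you would need to either reproduce Mignotte's parameter optimization in full or show explicitly how the corollary follows from the general Theorem 2 of \cite{Mig} by a stated specialization of its free parameters.
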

\begin{prop}\label{prop:1M}
Let $\{a,b\}$ be a $D(4)$-pair with $a<b<a+57\sqrt{a}$.
Let $c \in \{c_1^-,c_1^+,c_2^-,c_2^+,c_3^-,c_3^+,c_4^-\}$.
Suppose that $\{a,b,c,d\}$ is an irregular $D(4)$-quadruple for some $d$. Then $a<6.55\cdot 10^{11}$.
\end{prop}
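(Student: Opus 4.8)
The plan is to combine the upper bound for the linear form $\Lambda$ with a lower bound coming from Baker's theory (Theorem~\ref{thm:Mign}), and to play the resulting estimate against the lower bound for $n'$ furnished by Lemma~\ref{lem:lb-1}. An irregular quadruple $\{a,b,c,d\}$ forces a nontrivial coincidence $v_{m'}=w_{n'}$ of the two recurrences; after disposing of the finitely many small indices $n'\le 2$ by a direct comparison of initial terms, Lemma~\ref{lem:m>n} gives $m'>n'$ and $\nu'=m'-n'\ge 2$. For such a solution the relations \eqref{sol:zx}--\eqref{sol:zy} produce the form $\Lambda=m'\log\alpha_1-n'\log\alpha_2+\log\mu$, for which \eqref{lf1} (resp.\ \eqref{lf2}) yields $\log\Lambda<(2-2m')\log\alpha_1$ (resp.\ $(1-2n')\log\alpha_2$). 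This is the upper bound I would use.

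Next I would bound $|\Lambda|$ from below by Theorem~\ref{thm:Mign}, applied with $\gamma_1=\alpha_1$, $\gamma_2=\alpha_2$, $b_1=m'$, $b_2=n'$. Each $\alpha_i$ is a quadratic unit (a root of $X^2-sX+1$, resp.\ $X^2-tX+1$), so $h(\alpha_1)=\tfrac12\log\alpha_1$ and $h(\alpha_2)=\tfrac12\log\alpha_2$; since $ab$ is not a perfect square the fields $\mathbb{Q}(\sqrt{ac})$ and $\mathbb{Q}(\sqrt{bc})$ differ, whence $\alpha_1,\alpha_2$ are multiplicatively independent and $D$ is the degree factor of the theorem. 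The three-term $\Lambda$ is first recast as the two-logarithm form required by the theorem, the bounded algebraic factor $\mu$ being controlled through the explicit fundamental solutions of Lemma~\ref{lem:fs} as in the method of \cite{HT}. Fixing a parameter $\rho\ge 4$, I would take $\kappa=\log\rho$, $a_i=(\rho-1+D)\log\alpha_i$ (so that the defining inequality for $a_i$ holds), and $h$ equal to the prescribed maximum; the term $\log(b_1/a_2+b_2/a_1)$ makes $h=O(\log m')$.

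Comparing the two estimates gives $2(m'-1)\log\alpha_1\le (C_0+0.06)(\kappa+h)^2 a_1a_2$. Since $a_1a_2=(\rho-1+D)^2\log\alpha_1\log\alpha_2$, the large factor $\log\alpha_1$ cancels, and using $c\le c_4^-<b^6$ together with $b<2a$ to bound $\log\alpha_2=O(\log a)$, I obtain an upper bound $m'=O\big((\kappa+h)^2\log a\big)$, which after inserting $h=O(\log m')$ is only polylogarithmic in $a$. On the other hand Lemma~\ref{lem:lb-1} gives $m'>n'>\tfrac{2}{57}(\nu'-0.001)\sqrt a\,\log\alpha_1\gg\sqrt a\,\log a$. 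Thus a lower bound growing like $\sqrt a$ is confronted with an upper bound growing like a fixed power of $\log a$, so the two can hold simultaneously only for bounded $a$.

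Making the constants explicit reduces everything to a transcendental inequality of the shape $\sqrt a<C(\log a)^2$, which I would solve numerically after optimizing $\rho$ (hence $\kappa$, $a_1$, $a_2$, $h$, $\chi$, $v$ and $C_0$) so as to minimise $C$ uniformly over the seven admissible values of $c$, arriving at $a<6.55\cdot 10^{11}$. I expect the main difficulty to lie precisely here: the self-referential dependence of $h$ (and hence of the whole right-hand side) on $m'$ forces an iterative solution of the final inequality, and squeezing the numerical constant down to the stated value requires a careful, case-by-case optimisation of $\rho$; a secondary technical point is the faithful reduction of the three-term $\Lambda$ to a genuine two-logarithm form so that Theorem~\ref{thm:Mign} is applicable.
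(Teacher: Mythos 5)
Your overall strategy---upper bound for $\Lambda$ from \eqref{lf1}--\eqref{lf2}, lower bound from Theorem~\ref{thm:Mign}, played off against Lemma~\ref{lem:lb-1}---is the right one, but the specific application of Theorem~\ref{thm:Mign} you propose does not go through, and the step you defer as ``a secondary technical point'' is in fact the crux of the argument. Theorem~\ref{thm:Mign} applies to a homogeneous form $b_1\log\gamma_1-b_2\log\gamma_2$, whereas our $\Lambda=m'\log\alpha_1-n'\log\alpha_2+\log\mu$ carries the extra term $\log\mu$ with $\mu\neq 1$ in general. With your choice $\gamma_1=\alpha_1$, $\gamma_2=\alpha_2$, $b_1=m'$, $b_2=n'$ there is nowhere to put $\log\mu$, and you cannot discard it either: $|\log\mu|$ can be of size up to roughly $10^{-2}$, which dwarfs the upper bound $\alpha_1^{2-2m'}$, so applying the theorem to $m'\log\alpha_1-n'\log\alpha_2$ alone yields no contradiction. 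The workable regrouping is $\Lambda=\log(\alpha_1^{\nu'}\mu)-n'\log(\alpha_2/\alpha_1)$ with $b_1=1$, $b_2=n'$, $\gamma_1=\alpha_1^{\nu'}\mu$, $\gamma_2=\alpha_2/\alpha_1$; this in turn requires an explicit height bound for $\mu$ (obtained from the fact that the leading coefficient of its minimal polynomial divides $16a^2(b-c)^2$ together with the sizes of its conjugates, giving $h(\mu)<2.001\log\alpha_2$ for $c=c_1^-$), so that one may take $a_1=8(\nu'+2.002)\log\alpha_2$ and $a_2=8.348\log\alpha_2$.

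This regrouping also changes the quantitative outcome in an essential way, which is why your predicted final inequality has the wrong shape. With $b_1=1$ the quantity entering $h$ is essentially $b_2/a_1=n'/\bigl((\nu'+2.002)\log\alpha_2\bigr)$, and comparing the two bounds on $\Lambda$ bounds exactly this ratio by an absolute constant (below $14170$). Lemma~\ref{lem:lb-1} bounds the same ratio from below by
\[
\frac{2}{57}\cdot\frac{\nu'-0.001}{\nu'+2.002}\cdot\frac{\log\alpha_1}{\log\alpha_2}\,\sqrt{a}\;>\;0.0175\sqrt{a},
\]
so the $\log\alpha$ factors cancel and one obtains $\sqrt{a}<8.1\cdot 10^{5}$, i.e.\ $a<6.55\cdot 10^{11}$, directly. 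In your version $h\approx 4\log(m'/\log\alpha_2)\approx 2\log a$ is unbounded, the logarithms do not cancel, and you are left with $\sqrt{a}<C(\log a)^2$, a structurally weaker inequality that there is no reason to expect to reproduce the stated constant. You need to redo the parameter choice along the lines above before undertaking the numerical optimisation you describe.
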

\begin{proof}
We apply Theorem \ref{thm:Mign} to our $\Lambda$ by rewriting the linear form
\[
\Lambda=\log(\alpha_1^{\nu'}\mu)-n'\log \left(\frac{\alpha_2}{\alpha_1}\right).
\]
So in our case we have
\[
D=4,\enskip b_1=1,\enskip b_2=n',\enskip \gamma_1=\alpha^{\nu'}\mu,\enskip \gamma_2=\alpha_2/\alpha_1.
\]
Here we will give only the proof for $c=c_1^-$ because other cases can be proven similarly. Also, from now on, let us assume $a>10^{10}$.

Firstly, it is easy to see
$h(\gamma_2)=h(\alpha_2/\alpha_1)\leq h(\alpha_2)+h(\alpha_1)
           \leq \log\alpha_2$.

In order to estimate $h(\gamma_1)$, we have to bound $h(\mu)$. In general, the leading coefficient of the minimal polynomial of $\mu$ over $\mathbb{Z}$ divides $16a^2(b-c)^2$. If $c=c_1^-$, then since the absolute values of conjugates of $\mu$ greater than 1 are
\[
\frac{\sqrt{b}(\sqrt{a}+\sqrt{c})}{\sqrt{a}(\sqrt{b}+\sqrt{c})},\enskip
\frac{\sqrt{b}(\sqrt{a}+\sqrt{c})}{\sqrt{a}(\sqrt{b}-\sqrt{c})},
\]
we have
\begin{align}
h(\mu)&\leq \frac14 \log\left\{a^2(b-c)^2\cdot \frac{b(\sqrt{a}+\sqrt{c})^2}{a(b-c)}\right\}\notag\\
      &<\frac14 \log(1.001b^4c^4)<2.001\log \alpha_2.\label{h1':1-}
\end{align}

Then we have
\begin{align*}
h(\gamma_1)=h(\alpha_1^{\nu'}\mu)\leq \nu'h(\alpha_1)+h(\mu)<\left(0.5\nu'+2.001 \right)\log \alpha_2.
\end{align*}

Now we can take $\rho=5$ and $a_2=8.348\log\alpha_2$. Moreover, if $c=c_1^-$, from $a>10^{10}$ we get $c <400$, which implies
\begin{align*}
\mu &\leq \frac{1-\sqrt{c/a}}{1-\sqrt{c/b}}<1.001,\\
\alpha_2 &>100028.
\end{align*}
Then, we have
\[
\log \mu+4.002\log \alpha_2<4.003\log \alpha_2,
\]
which enables us to take
\[
a_1=8(\nu'+2.002)\log \alpha_2.
\]
%
From Lemma \ref{lem:lb-1} we now have
\begin{align*}
\frac{b_1}{a_2}&=\frac{1}{8.348\log \alpha_2}<\frac{n'(\nu'+2.002)}{2/57(\nu'-0.001)\sqrt{a}\log \alpha_1}\cdot \frac{1}{8(\nu'+2.002)\log \alpha_2}\\
               &<0.001\cdot \frac{b_2}{a_1}.
\end{align*}

Then we may take
\[
h=4\log \left(\frac{n'}{(\nu'+2.002)\log \alpha_2}\right)-2.306.
\]
If $h \geq 35$, then $C_0<0.2411$.
It follows from (\ref{lf2}) and Theorem \ref{thm:Mign} that
\[
\frac{n'}{(\nu'+2.002)\log \alpha_2}<10.055\cdot\left(4\log \left(\frac{n'}{(\nu'+2.002)\log \alpha_2}\right)-0.696\right)^2,
\]
which implies that
\[
\frac{n'}{(\nu'+2.002)\log \alpha_2}<14170.
\]
If $h<35$, then it yields
\[
\frac{n'}{(\nu'+2.002)\log \alpha_2}<11231<14170.
\]
It follows now from Lemma \ref{lem:lb-1} that
\[
\frac{2/57(\nu'-0.001)\sqrt{a}\log \alpha_1-0.001}{(\nu'+2.002)\log \alpha_2}<14170.
\]
Since $\nu' \geq 2$ and
\[
\frac{\log \alpha_1}{\log \alpha_2}>0.999,
\]
we obtain $a<6.55\cdot 10^{11}$. \par

\end{proof}

\section{Concluding remarks}

To finish the proof of the main Theorem \ref{thm:main1}, we have to check what is happening with small values of $a$, i.e. $a<6.55\cdot10^{11}$. We do that using the already pretty standard Baker-Davenport reduction method. However, since the bound for $a$ is very large here, we have to use that $a<b<a+57\sqrt{a}$.

It implies that $a<r<a+57/2\sqrt{a}$ or $(r-a)^2<813a$. Moreover, since $(r-a)^2\equiv 4\pmod{a}$, we get $(r-a)^2=4,\,a+4,2a+4,\ldots,812a+4$. So it will give us 3691 parametric families of $D(4)$-pairs to consider their extension. The families we get are:
$$\{k^2-4,k^2+2k-3\},$$ $$\{2k^2-2,2k^2+4k\},$$ $$\{3k^2-2k-1,3k^2+4k\},$$ $$\{3k^2+2k-1,3k^2+8k+4\},$$ $$.$$ $$.$$ $$.$$ $$\{812k^2-4k,812k^2+1620k+808\},$$$$\{812k^2+4k,812k^2+1628k+816\},$$ $$\{812k^2-228k+16,812k^2+1396k+600\},$$ $$\{812k^2+228k+16,812k^2+1852k+1056\},$$ $$\{812k^2-584k+105,812k^2+1040k+333\},$$ $$\{812k^2+584k+105,812k^2+2208k+1501\},$$ $$\{812k^2-808k+201,812k^2+816k+205\},$$ $$\{812k^2+808k+201,812k^2+2432k+1821\}.$$

For all those families we have $a$ and $b$ fixed, we get an upper bound for $k$ and we know for which $c$'s we have to do the reduction. We implement that in Mathematica and get that only extensions to $D(4)$-triple $\{a,b,c\}$ are given by $d=0$ and $d=d_{\pm}$ which finishes the proof of Theorem \ref{thm:main1}.\\

Finally, here are some observations. This method would also work if, for example $a<b<178\sqrt{a}<5a$, because we also know how $c$'s which extend that pair are given. However, that way we would get a larger bound on $a$ but it would also leave us with many more parametric families to consider and it would take years to make the reduction using today's computers.

The same method would, at least in theory, also work with $a<b<a+a^N$ if $N<1$. But in general that way we would also get much larger bound for $a$ and we would not be able to find parametric families of $D(4)$-pairs which we have to consider. Therefore, to solve the problem with $b<5a$ or something similar the new ideas or approaches are needed, at least for proving strong quintuple conjecture, i.e. that $D(4)$-triple has a unique extension to a quadruple with a larger element.

\vspace{5mm}
{\bf Acknowledgements}
The author is supported by Croatian Science Foundation grant number 6422.


Faculty of Civil Engineering, University of Zagreb, Fra Andrije Ka\v{c}i\'{c}a-Mio\v{s}i\'{c}a 26, 10000 Zagreb, Croatia \\
E-mail: filipin@master.grad.hr \\[6pt]
\end{document}